\newif\ifpdf
\newtheorem{theorem}{Theorem}[section]
\newtheorem{lemma}[theorem]{Lemma}
\newtheorem{proposition}[theorem]{Proposition}
\newtheorem{corollary}[theorem]{Corollary}
\newtheorem{observation}[theorem]{Observation}
\theoremstyle{definition}
\newtheorem{definition}[theorem]{Definition}
\newtheorem{example}[theorem]{Example}
\theoremstyle{remark}
\numberwithin{equation}{section}
\newfont{\kh}{msbm10}
\begin{document}
\title[Some results in a new power graphs in finite groups  ]
{Some results in a new power graphs in finite groups }
\author{S. H. Jafari}
\address{S. H. Jafari, \newline Department of Mathematics,
University of Shahrood , P. O. Box 3619995161-316,
Shahrood, Iran} \email{shjafari55@gmail.com }

\subjclass[2010]{20D45 , 20D15}

\begin{abstract}
The power graph of a group is the graph whose vertex set is the
set of non-trivial elements of group, two elements being adjacent
if one is a power of the other.  We define a new power graph and
study on connectivity, diameter and clique number of this graphs.

\end{abstract} \maketitle

\section{Introduction.}
 The directed power graph of a
semigroup $S$ was defined by Kelarev and Quinn \cite{kq} as the
digraph $\Gamma(S)$ with vertex set S, in which there is an arc from
$x$ to $y$ if and only if $x\neq y$ and $y=x^m$ for some positive
integer $m$. Motivated by this, Chakrabarty et al. \cite{css}
defined the (undirected) power graph $\Gamma(S)$, in which distinct
$x$ and $y$ are joined if one is a power of the other.

Let $L$ be a graph. We denote $V(L)$  and $E(L)$ for vertices and
edges of $L$, respectively.  For any two vertex $x,y$ of $L$, we
denote $d(x,y)$ for the length of smallest path between $x,y$.
Also $diam(L)=Max\{d(x,y)| x,y\in V(L)\}$. The clique number of
$L$, is denoted by  $w(L)$, is the maximum size  of  complete
subgraphs of $L$.
 The (open) neighborhood $N(a)$ of vertex $a\in V(L)$ is the set of
vertices are adjacent to $a$. Also  the closed neighborhood of
$a$, $N[a]$ is $N(a)\cup \{a\}$. All groups in this paper are
finite.

Since in current power graphs the trivial element is adjacent two
any element and this property is not  very good, we define a new
power graph.

 \begin{definition} Let $G$ be a group and $S$ is a subset of
$G$. We define the power graph $\Gamma(G,S)$ with vertex set $S$
and two elements are adjacent if and if only one of them is power
of another. Also we denote $\Gamma_1(G)$ for $\Gamma(G,S)$ where
$S=G-\{1\}$.
 \end{definition}.

\begin{observation} \label{ob}
 In the power graph of a finite group $G$,

 (i) $N[a]=N[a^i]$ for any $a \in G$
and $(o(a),i)=1$.

(ii) $\Gamma(G)$ is complete if and only if $G$ is a cyclic group of prime power order.
\end{observation}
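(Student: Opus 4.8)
The plan is to handle both parts by translating the adjacency relation of $\Gamma(G)$ into a statement about cyclic subgroups. The crucial elementary fact is that, for vertices $a,b$, we have that $b$ is adjacent or equal to $a$ precisely when $b\in\langle a\rangle$ or $a\in\langle b\rangle$; indeed ``one is a power of the other'' is exactly $b\in\langle a\rangle$ or $a\in\langle b\rangle$, and the case $b=a$ is absorbed into $b\in\langle a\rangle$. Thus the closed neighborhood can be written uniformly as $N[a]=\{\,b : b\in\langle a\rangle \text{ or } a\in\langle b\rangle\,\}$, and both parts will follow from this description.

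For part (i), I would first note that $(o(a),i)=1$ forces $\langle a\rangle=\langle a^i\rangle$, since $a^i$ is then a generator of the cyclic group $\langle a\rangle$ (equivalently, there is $j$ with $a=(a^i)^j$). Substituting this equality into the formula for $N[a]$ gives the result with no case analysis: the condition $b\in\langle a\rangle$ is identical to $b\in\langle a^i\rangle$, while the condition $a\in\langle b\rangle$ is equivalent to $\langle a\rangle\subseteq\langle b\rangle$, i.e.\ to $\langle a^i\rangle\subseteq\langle b\rangle$, i.e.\ to $a^i\in\langle b\rangle$. Hence $N[a]=N[a^i]$.

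For the ($\Leftarrow$) direction of part (ii), suppose $G=\langle g\rangle$ with $|G|=p^n$. I would invoke the standard fact that a cyclic $p$-group has a unique subgroup of each order $p^s$ with $0\le s\le n$, so its subgroups form a chain under inclusion. Given $x,y\in G$, the subgroups $\langle x\rangle$ and $\langle y\rangle$ are therefore comparable, whence one of $x,y$ lies in the cyclic group generated by the other and the two vertices are adjacent; as $x,y$ were arbitrary, $\Gamma(G)$ is complete.

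For ($\Rightarrow$), assume $\Gamma(G)$ is complete. I would first show $G$ is cyclic by fixing an element $g$ of maximal order $m$: for any $x$, completeness gives either $x\in\langle g\rangle$, or $g\in\langle x\rangle$, and in the latter case $m=o(g)\le o(x)\le m$ forces $\langle g\rangle=\langle x\rangle$, so again $x\in\langle g\rangle$. Thus $G=\langle g\rangle$. To see the order is a prime power, suppose two distinct primes $p,q$ divided $|G|$; then $G$ would contain elements $u,v$ of orders $p$ and $q$, and neither could be a power of the other, since that would force $p\mid q$ or $q\mid p$, contradicting completeness. The only genuine subtlety, and the step I would write out most carefully, is this maximal-order argument establishing cyclicity, together with the routine bookkeeping that the trivially adjacent identity vertex never affects completeness.
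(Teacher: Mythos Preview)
Your argument is correct in both parts. The reformulation $N[a]=\{\,b : b\in\langle a\rangle \text{ or } a\in\langle b\rangle\,\}$ is exactly the right viewpoint, and your maximal-order argument for cyclicity, together with the incompatibility of elements of distinct prime orders, cleanly handles the forward direction of (ii).

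As for a comparison: the paper offers no proof of this Observation at all. It is stated without justification, presumably because the authors regard it as folklore---part (ii) is the well-known characterization of complete power graphs due to Chakrabarty, Ghosh and Sen \cite{css}, and part (i) is an immediate consequence of $\langle a\rangle=\langle a^i\rangle$ when $(o(a),i)=1$. So your proposal does not differ from the paper's approach so much as supply what the paper omits; nothing needs to change.
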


\section{connectivity in power graphs}


\begin{lemma} \label{con1}
Let $G$ be a finite group. Then $\Gamma_1(G)$ is connected if and
only if for any two  elements $x,y$ of prime orders where $\langle
x\rangle \neq \langle y \rangle$, there exist elements
$x=x_0,x_1,\ldots,x_t=y$  such that $o(x_{2i})$ is prime,
$o(x_{2i+1})=o(x_{2i})o(x_{2i+2})$ for $i \in \{0,...,t/2\}$ and,
$x_i$ is adjacent to $x_{i+1}$ for $i \in \{0,1,...,t-1\}$.
\end{lemma}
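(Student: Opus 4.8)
The plan is to prove both directions at once by reducing the global connectivity of $\Gamma_1(G)$ to a local statement about prime-order elements, and then upgrading an arbitrary connecting path to one of the prescribed alternating shape. First I would record two elementary facts. (a) Every non-identity $g\in G$ is joined by an edge to a prime-order element: if $p\mid o(g)$, then $g^{o(g)/p}$ has order $p$ and lies in $\langle g\rangle$, so it is a power of $g$ and hence adjacent to $g$. (b) Adjacency of $u$ and $v$ means $\langle u\rangle\subseteq\langle v\rangle$ or $\langle v\rangle\subseteq\langle u\rangle$; in particular the two cyclic groups are comparable and their union lies in one cyclic subgroup. Using (a), the whole graph is connected if and only if every two prime-order vertices lie in the same component, which is what lets me pass between ``$\Gamma_1(G)$ connected'' and a statement purely about prime-order elements.

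The core claim is then that two prime-order elements $x,y$ with $\langle x\rangle\ne\langle y\rangle$ lie in the same component of $\Gamma_1(G)$ if and only if the prescribed alternating path between them exists. The ``if'' direction is immediate, since such a path is in particular a path in $\Gamma_1(G)$. For ``only if'', I would start from an arbitrary path $x=v_0,v_1,\dots,v_n=y$ and, for each $i$, choose a prime-order element $z_i\in\langle v_i\rangle$, taking $z_0=x$ and $z_n=y$ (already of prime order). By fact (b), for each $i$ the groups $\langle v_i\rangle$ and $\langle v_{i+1}\rangle$ lie in a common cyclic subgroup $C_i$, hence $\langle z_i\rangle,\langle z_{i+1}\rangle\subseteq C_i$. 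This produces a sequence $x=z_0,z_1,\dots,z_n=y$ of prime-order elements in which consecutive terms always lie in a common cyclic subgroup.

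It remains to convert this sequence into an alternating path. After deleting repetitions I may assume $\langle z_j\rangle\ne\langle z_{j+1}\rangle$ for all $j$: whenever $\langle z_j\rangle=\langle z_{j+1}\rangle$ the two generate the same cyclic group, so one may be collapsed onto the other, and since $\langle z_j\rangle=\langle z_{j+1}\rangle\subseteq C_j$ this collapse preserves the ``common cyclic subgroup'' property of the new neighbour; the endpoints survive because $\langle x\rangle\ne\langle y\rangle$. Now fix consecutive $z_j,z_{j+1}$ of orders $p,q$ lying in a common cyclic group $C$. A cyclic group has a unique subgroup of each order, so $\langle z_j\rangle\ne\langle z_{j+1}\rangle$ forces $p\ne q$; then $C$ contains a cyclic subgroup of order $pq$ whose generator $w$ satisfies $o(w)=pq=o(z_j)o(z_{j+1})$ and $\langle z_j\rangle,\langle z_{j+1}\rangle\subseteq\langle w\rangle$, so $z_j\sim w\sim z_{j+1}$. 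Splicing the length-two pieces $z_j,w,z_{j+1}$ together yields a walk whose even-indexed vertices are the prime-order $z_j$ and whose odd-indexed vertices are the order-$pq$ elements $w$, which is exactly a path of the required form.

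The main obstacle is this last conversion: one must simultaneously track the prime-order representatives inside each cyclic group and guarantee the bridging element of order $pq$ exists. The delicate point is the degenerate case in which two consecutive primes coincide ($p=q$), for then no order-$p^2$ element can join two \emph{distinct} order-$p$ subgroups, since a cyclic group of order $p^2$ has only one subgroup of order $p$; this shows the case cannot occur for distinct consecutive subgroups and is instead eliminated by the collapsing step. Checking that collapsing never removes the endpoints (guaranteed by $\langle x\rangle\ne\langle y\rangle$) and that the spliced indices satisfy the parity conditions $o(x_{2i})$ prime and $o(x_{2i+1})=o(x_{2i})o(x_{2i+2})$ then completes the argument.
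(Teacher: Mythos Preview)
Your argument is correct and is in fact cleaner than the paper's. The two proofs differ in strategy. The paper argues by minimal counterexample: it assumes there is a pair $x,y$ of prime order with $\langle x\rangle\neq\langle y\rangle$ admitting no alternating path, chooses such a pair at minimal distance, takes a geodesic $x=x_0\!-\!x_1\!-\cdots-\!x_t=y$, and analyses the first ``hump'' in the sequence of orders. Minimality forces this hump to have length~$2$, i.e.\ $x_0,x_2\in\langle x_1\rangle$; since $x_0,x_2$ are non-adjacent inside the cyclic group $\langle x_1\rangle$, a prime $q\neq o(x_0)$ divides $o(x_2)$, and the segment $x_0\!-\!x_0b\!-\!b$ (with $b\in\langle x_2\rangle$ of order $q$) replaces the first two steps, contradicting the minimal choice.

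Your route is direct and constructive: from an arbitrary path you pass to prime-order representatives $z_i\in\langle v_i\rangle$, observe that consecutive $z_i$ always sit in a common cyclic group, collapse blocks with $\langle z_j\rangle=\langle z_{j+1}\rangle$, and then splice in an order-$pq$ element between each surviving consecutive pair. This avoids the minimality bookkeeping entirely and makes the structure of the alternating path explicit; in particular, your observation that the degenerate case $p=q$ is precisely what the collapsing step eliminates is the conceptual heart of the lemma, and it is somewhat hidden in the paper's argument. The only places one might ask you to say a word more are (i) that iterating the collapse preserves the ``common cyclic subgroup'' property (your one-line justification is correct but terse), and (ii) that the spliced sequence is a priori only a walk, which is still enough since the lemma does not require the $x_i$ to be distinct and consecutive terms have different orders.
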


\begin{proof}
"$\Longleftarrow$" Is trivial.

 "$\Longrightarrow$"
Assume that $\Gamma_1(G)$ is connected and $x,y$ are two elements
of prime orders where $d(x,y)$ is minimum and $x,y$ do not satisfy
in conclusion of Lemma. Let $x=x_0-x_1-...-x_t=y$ is a smallest
path. Consequently $o(x_i) \neq o(x_{i+1})$. Since $o(x)$ is
prime then $x_0=x_1^s$, for some integer $s$. Thus $o(x)<o(x_1)$
and $o(x)\mid o(x_1)$. Assume that $k$ is a greatest integer such
that $o(x_0)< o(x_1)< \cdots < o(x_k)$. Also let $m$ is a greatest
integer such that $o(x_k)>o(x_{k+1}> \cdots >o(x_m)$. We have
$\langle  x_0 \rangle \subseteq \langle x_1\rangle \subseteq ...
\subseteq \langle x_k\rangle$ and
 $\langle  x_m \rangle
\subseteq \langle x_{m-1}\rangle \subseteq ... \subseteq \langle
x_k\rangle$. Then $x_0,x_m  $ are adjacent to $x_k$ and $k=1,
m=2$. Since  $x_0,x_2 \in \langle x_1 \rangle$ and $x_0,x_2$ are
not adjacent, then $\langle x_2 \rangle$ has element $b$ of prime
order distinct to $o(x_0)$. Now $x_0-bx_0-b$ satisfies in Lemma
and  $x=x_0-bx_0-b-x_3-...-x_t=y$  is a path. Consequently
$x_3-...-x_t$ do not satisfy Lemma, a contradiction.
\end{proof}
For $n\geq 3$ we consider the generalized Quaternion group as
following.

 $Q_n=\langle a,b|a^n=b^2, b^{-1}aba=b^4=1 \rangle$.
 \begin{observation} \label{obq} \cite[Th 5.4.10]{go}
 The finite $p$-group has exactly one subgroup of order $p$ if and
 only if it is a cyclic or  a generalized Quaternion group.
  \end{observation}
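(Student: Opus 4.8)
I would prove the equivalence by treating the two implications separately, spending almost all the effort on the ``only if'' direction; the ``if'' direction is a direct inspection. Indeed, a cyclic group of order $p^m$ has exactly one subgroup of each order dividing $p^m$, so exactly one of order $p$; and a generalized quaternion group (necessarily a $2$-group) has a unique element of order $2$, namely its central involution, hence a unique subgroup of order $2$. Both checks follow at once from the defining relations, so for odd $p$ the asserted conclusion is simply ``cyclic.''

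For the converse, assume $P$ is a $p$-group with a unique subgroup $H$ of order $p$, and argue by induction on $|P|$. I would first record three reductions. First, every subgroup $Q\le P$ inherits the hypothesis, since a subgroup of order $p$ inside $Q$ is one inside $P$; hence by induction every \emph{proper} subgroup of $P$ is cyclic or generalized quaternion. Second, every element of order $p$ generates a subgroup of order $p$, which must be $H$, so all such elements lie in $H$; moreover $Z(P)\neq 1$ contains a subgroup of order $p$, forcing $H\le Z(P)$. Third, every abelian subgroup of $P$ is cyclic, because an abelian $p$-group with only one subgroup of order $p$ has $\Omega_1$ of order $p$ and is therefore cyclic by the structure theorem.

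The engine is a subgroup $A$ maximal among abelian normal subgroups of $P$. By the third reduction $A=\langle a\rangle$ is cyclic, say of order $p^m$, and a standard argument gives $C_P(A)=A$ (otherwise one builds an abelian normal subgroup strictly larger than $A$). Conjugation then embeds $P/A=P/C_P(A)$ into $\mathrm{Aut}(A)\cong(\mathbb{Z}/p^m)^{\times}$, and the arithmetic of this unit group decides the structure. For $p$ odd, $(\mathbb{Z}/p^m)^{\times}$ is cyclic, so its Sylow $p$-subgroup is cyclic; a short computation of $p$-th powers in the extension shows that any \emph{nontrivial} action of $P/A$ on $A$ would create an element of order $p$ outside $A$, i.e.\ a second subgroup of order $p$, which is impossible. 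Hence $P/A=1$ and $P=A$ is cyclic. For $p=2$ and $m\ge 3$ one has $(\mathbb{Z}/2^m)^{\times}\cong \mathbb{Z}/2\times\mathbb{Z}/2^{m-2}$, whose three involutions are inversion $a\mapsto a^{-1}$, $a\mapsto a^{1+2^{m-1}}$, and $a\mapsto a^{2^{m-1}-1}$; these give respectively the dihedral/quaternion, modular, and semidihedral extensions by an element $x$ with $x^2\in A$. The unique-involution hypothesis eliminates the dihedral, modular, and semidihedral cases (each has involutions outside $A$) and leaves precisely the generalized quaternion group, where $x^2$ is the central involution.

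The step I expect to be the main obstacle is the $p=2$ analysis once $|P/A|$ may exceed $2$, together with the small base cases. I would control these by applying the inductive hypothesis to a maximal subgroup $M\trianglelefteq P$: if $M$ is cyclic I invoke the classification of $2$-groups with a cyclic subgroup of index $2$ and read off cyclic or generalized quaternion via the unique-involution condition; if $M$ is itself generalized quaternion I pass to its characteristic cyclic subgroup of index $2$, which is then normal in $P$, and rerun the $\mathrm{Aut}$-embedding argument to force $P$ to be generalized quaternion as well. This last subcase, and the exceptional behavior of $Q_8$ (where the cyclic subgroup of index $2$ is not unique), are where the uniqueness of the involution must be used most carefully.
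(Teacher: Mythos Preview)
The paper does not prove this observation at all: it is recorded as a known classical fact with a citation to Gorenstein, \emph{Finite Groups}, Theorem 5.4.10, and is then used as a black box. So there is no ``paper's proof'' to compare against beyond that reference.

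Your sketch is essentially the standard textbook argument (and indeed close to Gorenstein's own treatment): the ``if'' direction by inspection, and for ``only if'' the self-centralizing maximal abelian normal subgroup $A$, the embedding $P/A\hookrightarrow\mathrm{Aut}(A)$, and the case split on $p$ using the structure of $(\mathbb{Z}/p^m)^{\times}$. The outline is sound and your identification of the delicate points (the $p=2$ extension analysis when $|P/A|>2$, and the $Q_8$ anomaly) is accurate. Two places deserve a little more than you wrote. First, in the odd-$p$ step you should note explicitly that $|P/A|\le p$: any strictly intermediate $A<Q<P$ would be proper, hence cyclic by induction, hence abelian, hence contained in $C_P(A)=A$. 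With $|P/A|=p$ your ``$p$-th power'' computation then reads $(xa^i)^p=a^{j+ip}$ (using $1+k+\cdots+k^{p-1}\equiv p\pmod{p^m}$ for $p$ odd), so either $p\nmid j$ and $x$ has order $p^{m+1}$, forcing $P=\langle x\rangle$ cyclic, or $p\mid j$ and some $xa^i$ has order $p$ outside $A$. Second, in the $p=2$ endgame via a maximal subgroup $M$, the case $M\cong Q_8$ genuinely requires a direct check of the order-$16$ groups (since no cyclic index-$2$ subgroup of $Q_8$ is characteristic); you flag this, but it is a real case rather than a formality.
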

\begin{corollary}\label{cor1}
Let $G$ be a $p$-group. Then $\Gamma_1(G)$ is connected if and
only if $G$ is  a cyclic or a generalized Quaternion group.
\end{corollary}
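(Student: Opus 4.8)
The plan is to reduce the statement to a counting condition on subgroups of order $p$ and then invoke Observation \ref{obq}. Since $G$ is a $p$-group, every element of prime order has order exactly $p$, and two such elements $x,y$ satisfy $\langle x\rangle=\langle y\rangle$ precisely when one is a power of the other. The strategy is therefore to prove that $\Gamma_1(G)$ is connected if and only if $G$ has exactly one subgroup of order $p$; combined with Observation \ref{obq}, this is exactly the desired equivalence.

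For the direction assuming $G$ is cyclic or generalized quaternion, I would first note by Observation \ref{obq} that $G$ then has a unique subgroup of order $p$. Consequently all elements of prime order generate this single subgroup, so there are no two prime-order elements $x,y$ with $\langle x\rangle\neq\langle y\rangle$. The hypothesis of Lemma \ref{con1} is then vacuously satisfied, and that lemma immediately gives that $\Gamma_1(G)$ is connected.

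For the converse I would argue by contradiction: suppose $\Gamma_1(G)$ is connected but $G$ has two distinct subgroups of order $p$, witnessed by elements $x,y$ of order $p$ with $\langle x\rangle\neq\langle y\rangle$. Applying Lemma \ref{con1} yields a path $x=x_0,x_1,\ldots,x_t=y$ with $o(x_{2i})$ prime and $o(x_{2i+1})=o(x_{2i})o(x_{2i+2})$. In a $p$-group this forces $o(x_{2i})=p$ and hence $o(x_{2i+1})=p^2$ for every admissible $i$. The crucial step is to analyze each adjacency between an order-$p$ vertex $u$ and its neighbouring order-$p^2$ vertex $v$: since $o(u)<o(v)$, the adjacency must come from $u\in\langle v\rangle$, so $\langle u\rangle$ is a subgroup of order $p$ inside the cyclic group $\langle v\rangle$ of order $p^2$. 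As $\langle v\rangle$ has a unique subgroup of order $p$, the two even-indexed neighbours of $x_{2i+1}$ generate the same order-$p$ subgroup, giving $\langle x_{2i}\rangle=\langle x_{2i+2}\rangle$.

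Chaining these equalities along the even indices yields $\langle x\rangle=\langle x_0\rangle=\langle x_2\rangle=\cdots=\langle x_t\rangle=\langle y\rangle$, contradicting $\langle x\rangle\neq\langle y\rangle$. Hence $G$ has exactly one subgroup of order $p$, and Observation \ref{obq} finishes the proof. I expect the main obstacle to be this middle step: correctly extracting from the prime-order/product-order structure supplied by Lemma \ref{con1} that every intermediate vertex has order $p^2$, and then using the uniqueness of the order-$p$ subgroup inside a cyclic group of order $p^2$ to propagate the equality of subgroups across the whole path.
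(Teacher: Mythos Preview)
Your proof is correct and follows the same overall strategy as the paper: reduce connectivity of $\Gamma_1(G)$ for a $p$-group to the condition that $G$ has exactly one subgroup of order $p$, via Lemma~\ref{con1}, and then invoke Observation~\ref{obq}. The paper's own proof is extremely terse---it simply asserts that Lemma~\ref{con1} forces a unique subgroup of order $p$---whereas you spell out the path analysis (each $x_{2i+1}$ has order $p^2$, its unique order-$p$ subgroup forces $\langle x_{2i}\rangle=\langle x_{2i+2}\rangle$, and chaining gives the contradiction) and you also treat the converse direction explicitly, which the paper leaves implicit.
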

\begin{proof}
Let $\Gamma_1(G)$ is connected. Since any two element $x,y$ of
order $p$ is adjacent if and only if $\langle x \rangle=\langle y
\rangle$, then by Lemma \ref{con1}, $G$ has exactly one subgroup
of order $p$. Now the proof is completed, by Observation
\ref{obq}.
\end{proof}
\begin{corollary}
For $p$-group $G$, the number of connectivity complement of
$\Gamma_1(G)$ is equal to the number of subgroups of order $p$.
\end{corollary}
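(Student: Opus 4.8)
The plan is to read ``connectivity complement'' as ``connected component,'' so that the claim becomes: the number of connected components of $\Gamma_1(G)$ equals the number of subgroups of order $p$ in $G$. The natural approach is to exhibit a bijection between the set of connected components and the set of order-$p$ subgroups, exploiting the fact that inside a $p$-group every cyclic subgroup $\langle x\rangle$ is itself a cyclic $p$-group and therefore contains exactly one subgroup of order $p$ (a cyclic group has a unique subgroup of each order dividing its order; this is also the cyclic case of Observation \ref{obq}).

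First I would define a map $\phi$ from the vertex set $G-\{1\}$ to the set of order-$p$ subgroups by letting $\phi(x)$ be the unique subgroup of order $p$ contained in $\langle x\rangle$, namely $\langle x^{o(x)/p}\rangle$. This is well defined precisely because $\langle x\rangle$ is a cyclic $p$-group and hence has a single subgroup of order $p$.

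Next I would show that $\phi$ is constant on each connected component. If $x$ and $y$ are adjacent, then one is a power of the other, so without loss of generality $\langle y\rangle\subseteq\langle x\rangle$; the order-$p$ subgroup of $\langle y\rangle$ then sits inside $\langle x\rangle$, and by uniqueness it coincides with the order-$p$ subgroup of $\langle x\rangle$, giving $\phi(x)=\phi(y)$. Extending this along the edges of a path shows $\phi$ is constant on every component. Conversely, if $\phi(x)=\phi(y)=P$ and $z$ generates $P$, then $z\in\langle x\rangle$ and $z\in\langle y\rangle$, so $z$ is a power of each of $x$ and $y$ and thus adjacent to both; the path $x-z-y$ then places $x$ and $y$ in the same component. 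Hence two vertices lie in the same component if and only if they share the same $\phi$-value.

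Finally, $\phi$ is surjective, since any order-$p$ subgroup $P$ is generated by an element $z$ of order $p$ with $\phi(z)=P$. Combining surjectivity with the equivalence ``$\phi(x)=\phi(y)$ iff $x,y$ share a component,'' the induced map from connected components to order-$p$ subgroups is a bijection, and the stated count follows. I expect no serious obstacle here; the only point requiring care is the well-definedness of $\phi$, which rests entirely on the uniqueness of the order-$p$ subgroup inside the cyclic group $\langle x\rangle$.
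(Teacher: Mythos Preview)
Your argument is correct. The map $\phi$ sending each nontrivial $x$ to the unique order-$p$ subgroup of $\langle x\rangle$ is well defined, constant along edges, separates vertices with distinct $\phi$-values via the path $x\text{--}z\text{--}y$ (with the obvious collapse when $x$ or $y$ already has order $p$), and is surjective; this yields the claimed bijection between components and order-$p$ subgroups.

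The paper itself offers no proof of this corollary: it is stated immediately after Corollary~\ref{cor1} as an evident consequence of that argument together with Lemma~\ref{con1}. The intended reasoning there is that, by the proof of Corollary~\ref{cor1}, two elements of order $p$ in a $p$-group can be joined (in the sense of Lemma~\ref{con1}) only when they generate the same subgroup, and every nontrivial element is adjacent to an element of order $p$; hence each component meets exactly one subgroup of order $p$. Your approach is a cleaner, self-contained version of the same idea: rather than invoking the path structure of Lemma~\ref{con1}, you encode the invariant directly as the map $\phi$ and verify it is both constant on components and separates them. The two routes are equivalent in spirit, but yours avoids the machinery of Lemma~\ref{con1} entirely and makes the bijection explicit.
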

\begin{proposition}\label{prop}
If $G,H$ is two nontrivial groups and $(|G|,|H|)=1$, then
$\Gamma_1(G \times H)$ is connected.
\end{proposition}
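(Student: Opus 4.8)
The plan is to designate a single \emph{hub} vertex and show that every other vertex of $\Gamma_1(G\times H)$ is joined to it by an explicit short path; connectedness follows at once. Fix nontrivial elements $g_0\in G$ and $h_0\in H$ (these exist since both groups are nontrivial) and take $(1,h_0)$ as the hub. Here $1$ denotes the relevant identity, so the identity of $G\times H$ is $(1,1)$ and the vertex set is $(G\times H)-\{(1,1)\}$.

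The engine of the argument is the following observation, which rests entirely on the coprimality hypothesis. For any $g\in G$ and $h\in H$ we have $o(g)\mid |G|$ and $o(h)\mid |H|$, hence $(o(g),o(h))=1$. By the Chinese Remainder Theorem there is an integer $k$ with $k\equiv 1\pmod{o(g)}$ and $k\equiv 0\pmod{o(h)}$, and then $(g,h)^k=(g^k,h^k)=(g,1)$. Symmetrically some power of $(g,h)$ equals $(1,h)$. Thus whenever $g\neq 1$ the vertex $(g,1)$ is a power of $(g,h)$, and whenever $h\neq 1$ the vertex $(1,h)$ is a power of $(g,h)$; in either case the two elements are \emph{adjacent} in $\Gamma_1(G\times H)$. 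In short, every ``mixed'' element is simultaneously adjacent to its pure-$G$ part $(g,1)$ and its pure-$H$ part $(1,h)$.

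With this in hand I would connect an arbitrary vertex $(g,h)$ to the hub by cases, checking that every intermediate vertex is nontrivial and that consecutive vertices are distinct (so the edges are legitimate). If $g\neq 1$ and $h\neq 1$, the edge $(g,h)-(1,h)$ reduces to the pure-$H$ case. A pure-$G$ vertex $(g,1)$ with $g\neq 1$ is handled by $(g,1)-(g,h_0)-(1,h_0)$, applying the observation at each edge. A pure-$H$ vertex $(1,h)$ with $h\neq 1$ is handled by $(1,h)-(g_0,h)-(g_0,1)-(g_0,h_0)-(1,h_0)$, again invoking the observation edge by edge. Since every vertex reaches $(1,h_0)$, the graph is connected.

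The point to watch — and what I expect to be the only real subtlety — is that one cannot prove the claim by working inside the ``pure-$G$'' slice alone: the induced subgraph on $\{(g,1):g\neq 1\}$ is isomorphic to $\Gamma_1(G)$, which is typically disconnected (for instance whenever $G$ is a noncyclic $p$-group, by Corollary~\ref{cor1}). The coprimality hypothesis is precisely what supplies the missing bridges: it forces $\langle(g,h)\rangle$ to contain both $(g,1)$ and $(1,h)$, so the mixed elements glue the two slices together. Once this idea is isolated the CRT step and the nontriviality bookkeeping are routine, and the argument is self-contained (it does not even require Lemma~\ref{con1}).
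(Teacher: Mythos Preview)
Your argument is correct. The Chinese Remainder step does exactly what you claim: since $o(g)\mid |G|$, $o(h)\mid |H|$ and $(|G|,|H|)=1$, the orders are coprime and an appropriate power of $(g,h)$ hits $(g,1)$ or $(1,h)$; the nontriviality and distinctness checks along your three displayed paths are routine and all go through.

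The paper proceeds differently. Rather than fixing a hub and connecting every vertex to it, it invokes Lemma~\ref{con1} to reduce the problem to connecting pairs $x,y$ of \emph{prime} order. Because $(|G|,|H|)=1$, any prime-order element of $G\times H$ lies in $G\times\{1\}$ or in $\{1\}\times H$, so only three configurations arise, and each is handled by a short explicit path (e.g.\ $x - x(1,z) - (1,z) - y(1,z) - y$ when $x,y\in G\times\{1\}$ and $z\in H$ has prime order). Both proofs rest on the same bridge --- coprimality forces $\langle (g,h)\rangle$ to contain both $(g,1)$ and $(1,h)$ --- but the paper packages the general case into Lemma~\ref{con1}, whereas you bypass that lemma entirely with a direct hub argument. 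Your route is more self-contained and arguably cleaner for this proposition in isolation; the paper's route fits its overall narrative of analysing $\Gamma_1$ through prime-order elements and reusing Lemma~\ref{con1} across several results.
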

\begin{proof}
 Let $p,q$ be  two prime factors of
   $|G \times H|$ and $x,y$ be of order of
$p,q$, respectively. We have tree cases.

     Case 1.
      $x,y \in G \times \{1\}$. Then, there exists an element

      $z$ of prime order in $H$. Now  $x-x(1,z)-(1,z)-y(1,z)-y$ is a

      path.

     Case 2.
      $x,y \in \{1\} \times H$. Is similar to Case1.

     Case 3.
      $x \in G \times \{1\}$ and $y \in \{1\} \times H$. Now

      $x-xy-y$ is a path.

Now the proof is complete by Lemma \ref{con1}.
\end{proof}
\begin{proposition}
Let $G,H$ be two groups where $\Gamma_1(G)$ is connected
 and $|G|$ is not prime power.   Then $\Gamma_1(G \times H)$ is connected.
 \end{proposition}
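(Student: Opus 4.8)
The plan is to show directly that every vertex of $\Gamma_1(G\times H)$ is joined to the subgraph $K$ induced on $(G\setminus\{1\})\times\{1\}$. Two vertices $(g_1,1),(g_2,1)$ are adjacent exactly when one of $g_1,g_2$ is a power of the other, so $K$ is isomorphic to $\Gamma_1(G)$ and hence connected; once every other vertex is attached to $K$ the whole graph is connected, which is the criterion of Lemma \ref{con1}. An arbitrary vertex $(g,h)\neq(1,1)$ reduces to a prime-order one, since a suitable power $(g,h)^m$ has prime order and is adjacent to $(g,h)$; so it suffices to attach prime-order vertices. Throughout I use that $|G|$ is not a prime power, so $|G|$ has at least two distinct prime divisors.

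A prime-order vertex has order some prime $\ell$ and falls into three types: $(g,1)$ with $o(g)=\ell$, which already lies in $K$; $(1,h)$ with $o(h)=\ell$; and $(g,h)$ with $o(g)=o(h)=\ell$. For the type $(1,h)$ I choose a prime $p\mid|G|$ with $p\neq\ell$ and $a\in G$ of order $p$; since $\gcd(p,\ell)=1$, the Chinese Remainder Theorem shows that $(1,h)$ and $(a,1)$ are both powers of $(a,h)$, giving the path $(1,h)-(a,h)-(a,1)$ into $K$. This is the coprime routing already used in Proposition \ref{prop}.

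The main obstacle is the type $(g,h)$ with $o(g)=o(h)=\ell$: here $(g,h)$ is adjacent to neither $(g,1)$ nor $(1,h)$, because realizing either as a power of $(g,h)$ would force $k\equiv 1\ (\mathrm{mod}\ \ell)$ and $k\equiv 0\ (\mathrm{mod}\ \ell)$ simultaneously, so the coprime trick is unavailable. The heart of the argument is a structural claim: connectivity of $\Gamma_1(G)$ together with $|G|$ not a prime power forces every prime-order $g\in G$ to lie in a cyclic subgroup of order divisible by some prime $r\neq\ell$. To prove this I argue by contradiction: if every cyclic subgroup containing $g$ is an $\ell$-group, then the set $U=\{w\in G:\ g\in\langle w\rangle\}$ consists only of $\ell$-elements, and $U\cup\langle g\rangle$ is closed under adjacency in $\Gamma_1(G)$ (a neighbour $w'$ of $x\in U$ satisfies either $w'\in\langle x\rangle$, whence $g\in\langle w'\rangle$ since the cyclic $\ell$-group $\langle x\rangle$ has $\langle g\rangle$ as its unique minimal subgroup, or $x\in\langle w'\rangle$, whence $g\in\langle w'\rangle$; the case $x\in\langle g\rangle$ is similar). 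Thus the connected component of $g$ lies inside $U\cup\langle g\rangle$ and contains no element of order $q$ for any prime $q\neq\ell$ dividing $|G|$, contradicting connectivity of $\Gamma_1(G)$.

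Granting the claim, I pick $y\in G$ with $o(y)=\ell r$, $r\neq\ell$ prime, and $g\in\langle y\rangle$, say $g=y^{rs}$ with $\gcd(s,\ell)=1$. I then set $w=(y,v)$ with $v=h^{b}$, where $b\equiv (rs)^{-1}\ (\mathrm{mod}\ \ell)$ is chosen so that the two coordinate congruences become compatible; $v$ has order $\ell$, so $o(w)=\ell r$, and the Chinese Remainder Theorem gives $k$ with $(g,h)=w^{k}$, so $(g,h)$ is adjacent to $w$. Finally $w^{\ell}=(y^{\ell},1)$ with $y^{\ell}$ of order $r\neq 1$, hence $w^{\ell}\in K$, yielding the path $(g,h)-w-(y^{\ell},1)$. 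With all three types attached to the connected subgraph $K$, every vertex lies in one component and $\Gamma_1(G\times H)$ is connected. I expect the selection of $v$ and of the index $k$ to be routine bookkeeping; the cyclic-subgroup claim is where the hypotheses genuinely enter, and it is the step I would write out most carefully.
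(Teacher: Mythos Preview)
Your argument is correct and follows the same route as the paper: reduce to prime-order vertices with nontrivial second coordinate, produce an element $x\in G$ of prime order coprime to $\ell$ commuting with $g$, and route $(g,h)$ into $G\times\{1\}$ via an element of order $\ell r$ (the paper uses the path $(g,h)-(xg,h)-(x,1)$; your $w=(y,v)$ is a cosmetic variant). The paper simply asserts the existence of such an $x$ as a consequence of Lemma~\ref{con1}, and your connected-component argument for the structural claim is exactly the justification that assertion requires.
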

 \begin{proof}
 Let $(a,b)$ be an element of prime order in $G \times H$ and $b\neq 1$. We set $o(a,b)=o(b)=p$.
 By Lemma \ref{con1}, there exist $x \in G$ such that $o(x)$ is prime, $(o(x),p)=1$ and $ax=xa$.
 Now $(a,b)-(xa,b)-(a,1)$ is a path. Consequently every element of prime order is connected to
 an element of prime order in $G \times 1$.

 By Lemma \ref{con1}, the proof is  complete.
  \end{proof}
\begin{theorem}\label{sc}
Let $G$ be a finite nilpotent group. Then $\Gamma_1(G)$ is
connected if and only if $G$ is cyclic or generalized Quaternion
or not a $p$-group for any prime $p$.
\end{theorem}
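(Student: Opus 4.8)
The plan is to reduce everything to the fundamental structure theorem for finite nilpotent groups: such a $G$ is the internal direct product of its Sylow subgroups, $G \cong P_1 \times \cdots \times P_k$, where the $P_i$ are the Sylow subgroups for the distinct prime divisors $p_1, \ldots, p_k$ of $|G|$. This immediately splits the argument into the case $k = 1$, where $G$ is a $p$-group, and the case $k \geq 2$, where $G$ is not a $p$-group; in each case the earlier results of the paper already supply the conclusion, so the theorem becomes a matter of assembling them correctly.

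First I would dispose of the case where $G$ is a $p$-group. Here Corollary \ref{cor1} states exactly that $\Gamma_1(G)$ is connected if and only if $G$ is cyclic or generalized Quaternion. Since for a $p$-group the third clause of the theorem (``$G$ is not a $p$-group'') is false, the desired equivalence collapses precisely onto Corollary \ref{cor1}, and nothing further is required.

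Next I would treat the case where $G$ is not a $p$-group, aiming to show $\Gamma_1(G)$ is connected unconditionally (so the stated equivalence holds with both sides true). Writing $G \cong P_1 \times H$ with $H = P_2 \times \cdots \times P_k$, the hypothesis $k \geq 2$ makes both $P_1$ and $H$ nontrivial, and since they involve disjoint sets of prime divisors we have $(|P_1|, |H|) = 1$. Proposition \ref{prop} then yields connectivity of $\Gamma_1(P_1 \times H) = \Gamma_1(G)$ at once.

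Combining the two cases gives both directions. For the forward implication: if $\Gamma_1(G)$ is connected and $G$ is a $p$-group, Corollary \ref{cor1} forces $G$ to be cyclic or generalized Quaternion, while if $G$ is not a $p$-group the third clause holds outright. For the converse: if $G$ is not a $p$-group the Proposition-argument applies, and if $G$ is a $p$-group then one of the first two clauses must hold and Corollary \ref{cor1} gives connectivity (in particular a cyclic group of non-prime-power order is simply not a $p$-group and is handled by the Proposition case). I do not anticipate a real obstacle here, since the substantive content already lives in Corollary \ref{cor1} and Proposition \ref{prop}; the only point needing care is the bookkeeping that routes each group to the correct case and verifies that the Sylow splitting feeds Proposition \ref{prop} genuinely nontrivial factors of coprime order.
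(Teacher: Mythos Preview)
Your proof is correct and follows essentially the same route as the paper: handle the $p$-group case via Corollary~\ref{cor1}, and in the non-$p$-group case use the Sylow decomposition of a nilpotent group to feed two nontrivial coprime-order factors into Proposition~\ref{prop}. The paper's write-up is terser, but the logical content is identical.
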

\begin{proof}
By Corollary \ref{cor1} we assume that $|G|$ is not a prime
power. Thus  $G=P_1 \times P_2 \times ... \times P_t$ where
$P_1,...,P_t$ are all syllow subgroups of $G$. Consequently by
Proposition \ref{prop}, $\Gamma_1(G)$ is connected.
\end{proof}
\begin{example}
$\Gamma_1(S_3 \times S_3)$ is not connected but $\Gamma_1(S_3
\times Z_6)$ is
 connected where $S_3$ is   the Symmetric group of degree $3$ and
  $Z_6$ is  the cyclic group of order $6$.
 \end{example}

\section{diameter and clique number of this new power graphs of finite groups}

Now we calculate the diameter of this graph. We note
 that the diameter of old power graph is $1$ or $2$ trivially.
\begin{observation} \label{ob}
Let $G$ be a group. Then $diam(\Gamma_1(G))=1$ if and only if $G$
is a cyclic $p$-group.
\end{observation}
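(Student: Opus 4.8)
The plan is to read the condition $diam(\Gamma_1(G)) = 1$ as the statement that $\Gamma_1(G)$ is a complete graph on at least two vertices: every pair of distinct non-identity elements is joined, i.e. for all $x, y \in G - \{1\}$ one of $x, y$ is a power of the other. The first step is to connect this to the old power graph $\Gamma(G)$, whose vertex set includes the identity. Since $1 = x^{o(x)}$, the identity is a power of every element, so $1$ is adjacent to every other vertex of $\Gamma(G)$; hence $\Gamma(G)$ is complete if and only if its induced subgraph on $G - \{1\}$, namely $\Gamma_1(G)$, is complete. This lets me transfer the problem to a fact already recorded, namely that $\Gamma(G)$ is complete exactly when $G$ is cyclic of prime power order.

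For the ``if'' direction, suppose $G = \langle g \rangle$ is cyclic of order $p^k$. The subgroups of a cyclic $p$-group form a single chain $1 < \langle g^{p^{k-1}}\rangle < \cdots < \langle g \rangle$, so for any two non-identity elements $x, y$ the cyclic groups $\langle x\rangle$ and $\langle y\rangle$ are comparable; whichever is contained in the other exhibits one element as a power of the other. Thus $\Gamma_1(G)$ is complete, and provided it has at least two vertices we conclude $diam(\Gamma_1(G)) = 1$. For the ``only if'' direction I would invoke the reduction above: completeness of $\Gamma_1(G)$ forces completeness of $\Gamma(G)$, whence $G$ is cyclic of prime power order by the earlier Observation, i.e. a cyclic $p$-group.

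If one prefers a self-contained argument avoiding that Observation, the forward direction splits into three steps. First, $G$ must be a $p$-group: two elements of distinct prime orders $p \neq q$ can never be joined, since one being a power of the other would force $p \mid q$ or $q \mid p$. Second, all elements of order $p$ must generate the same subgroup, because two order-$p$ elements are adjacent only when they generate equal cyclic groups; hence $G$ has a unique subgroup of order $p$, and Observation \ref{obq} makes $G$ cyclic or generalized quaternion. The main obstacle is then the third step, ruling out the generalized quaternion case: here one exhibits two order-$4$ elements lying in distinct cyclic subgroups (for example $b$ and $ab$ in the notation $\langle a, b\rangle$), which are mutually non-adjacent, so $\Gamma_1$ is not complete. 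A final point to watch is the degenerate case $|G| = 2$, where $\Gamma_1(G)$ is a single vertex with $diam = 0$; the statement should be read as implicitly excluding this trivial exception.
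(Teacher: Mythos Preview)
Your proposal is correct and matches the paper's approach: the paper's one-line proof simply notes that $diam(\Gamma_1(G))=1$ means $\Gamma_1(G)$ is complete and then invokes the earlier Observation that $\Gamma(G)$ is complete iff $G$ is cyclic of prime power order, exactly as in your first paragraph. Your additional self-contained argument (via unique subgroup of order $p$ and elimination of the generalized quaternion case) and the remark on the $|G|=2$ degenerate case go beyond what the paper provides, but the core route is the same.
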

\begin{proof}
In this case $\Gamma_1(G)$ is complete and the proof is concluded  by Observation \ref{ob}.
\end{proof}
\begin{proposition}\label{propd2}
Let $G$ be a group. Then $diam(\Gamma_1(G))=2$ if and only if one
of the following is holds.

 (i) $G$ is a cyclic group of not prime  power order.

  (ii)$G$ is  a direct product of a cyclic group of odd order to
  a generalized  Quaternion group of order of  power of $2$.
 \end{proposition}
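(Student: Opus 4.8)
The plan is to prove both directions, dispatching sufficiency quickly and concentrating on necessity, which I will hang on a single device I call the \emph{overgroup lemma}: if $x,y$ are non-adjacent vertices of $\Gamma_1(G)$ with $\langle x\rangle\cap\langle y\rangle=\{1\}$ and $t$ is a common neighbour, then in fact $x,y\in\langle t\rangle$, so $\langle x,y\rangle$ is cyclic. To prove this, recall $t\sim x$ means $x\in\langle t\rangle$ or $t\in\langle x\rangle$. If $t\in\langle x\rangle$ then $\langle t\rangle\le\langle x\rangle$, and combining with $t\sim y$ gives either $y\in\langle t\rangle\le\langle x\rangle$, impossible since then $\langle y\rangle\le\langle x\rangle\cap\langle y\rangle=\{1\}$, or $t\in\langle y\rangle$, impossible since then $t\in\langle x\rangle\cap\langle y\rangle=\{1\}$ is trivial. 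Hence $x\in\langle t\rangle$, and by the symmetric argument $y\in\langle t\rangle$.

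For sufficiency, in case (i) a generator $g$ is a power of nothing but is adjacent to every other element (each is a power of $g$), so $g$ is a universal vertex and $\mathrm{diam}(\Gamma_1(G))\le 2$; as $G$ is cyclic but not of prime power order it is not complete, so the diameter is exactly $2$. In case (ii) write $G=C\times Q$ with $C=\langle c_0\rangle$ cyclic of odd order and $Q$ generalized quaternion with unique involution $z$; since $\gcd(|C|,|Q|)=1$ every cyclic subgroup splits as $\langle (c,h)\rangle=\langle c\rangle\times\langle h\rangle$. Given vertices $(c_1,h_1),(c_2,h_2)$ I exhibit a common neighbour: if $h_1=h_2=1$ take $(c_0,1)$; if $h_1,h_2\ne 1$ take $(1,z)$, which lies in $\langle(c_i,h_i)\rangle$ because $z$ lies in every nontrivial cyclic subgroup of $Q$; and if exactly one of them is trivial, say $h_1=1\ne h_2$, take $(c_0,h_2)$, whose cyclic group $C\times\langle h_2\rangle$ contains both vertices. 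Thus the diameter is at most $2$, and exactly $2$ because $Q$, hence $G$, is non-abelian and $\Gamma_1(G)$ is not complete.

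For necessity, assume $\mathrm{diam}(\Gamma_1(G))=2$; then $\Gamma_1(G)$ is connected and not complete, so $G$ is not a cyclic $p$-group. I first apply the overgroup lemma to two elements $x,y$ of the \emph{same} prime order $p$ generating distinct subgroups: these are non-adjacent with $\langle x\rangle\cap\langle y\rangle=\{1\}$, so the common neighbour forced by $\mathrm{diam}=2$ would place both inside one cyclic group, which has a unique subgroup of order $p$, a contradiction. Hence $G$ has a unique subgroup of order $p$ for each prime $p$, so each Sylow subgroup has a unique subgroup of order $p$ and, by Observation \ref{obq}, is cyclic or generalized quaternion. I then apply the lemma to elements $x,y$ of coprime prime-power orders $p^a,q^b$: again non-adjacent with trivial intersection, so a common neighbour places both in one cyclic group and $x,y$ commute. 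Decomposing arbitrary coprime-order elements into primary components upgrades this to commuting of all coprime-order elements, so $G$ is nilpotent.

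Finally, nilpotency gives $G=P_1\times\cdots\times P_k$, each $P_i$ cyclic or generalized quaternion by the previous step, with a generalized quaternion factor possible only at $p=2$. If all $P_i$ are cyclic then $G$ is cyclic, and not a prime power since $\Gamma_1(G)$ is not complete, giving (i); if the (unique) factor $Q$ at $p=2$ is generalized quaternion then $G=Q\times C$ with $C$ the product of the remaining cyclic Sylow subgroups, a cyclic group of odd order, giving (ii) (with $C$ possibly trivial). This matches Theorem \ref{sc} and the sufficiency direction. I expect the main obstacle to be the necessity argument, specifically the passage to nilpotency: the delicate points are the case analysis inside the overgroup lemma (excluding a common neighbour that lies \emph{below} an endpoint) and the primary-decomposition step promoting commuting of coprime prime-power elements to commuting of all coprime-order elements.
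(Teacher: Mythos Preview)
Your proof is correct and takes essentially the same approach as the paper: both hinge on the fact that a common neighbour of two non-adjacent vertices with trivially intersecting cyclic subgroups must contain both in its cyclic group, which you package as the ``overgroup lemma'' while the paper argues it inline (once for elements of coprime prime-power order to get nilpotency, once for elements of equal prime order to get the Sylow structure). The remaining differences are cosmetic---you swap the order of the nilpotency and Sylow-structure deductions, and for sufficiency you use a universal vertex in case~(i) and give explicit common neighbours in case~(ii), whereas the paper does a small case split on $\gcd(o(x),o(y))$ for~(i) and simply declares~(ii) ``same to~(i)''.
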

 \begin{proof}
 " $\Longrightarrow$"
 We first prove that $G$ is nilpotent. Let $x,y$ are two
 elements of order $p^n,q^m$ where $p,q$ are
 two distinct primes. Let $x-a-y$ is a path.
 If $a \in \langle x \rangle$
 then $ o(a)|o(x)=p^n$. Since $(p^n,q^m)=1$,
 we conclude that $a$ is not adjacent to $y$, a contradiction.
 Thus $x \in\langle a \rangle$ and
 respectively $y \in\langle a \rangle$.
 Consequently $xy=yx$, and then $G$ is nilpotent. Therefore
$G=P_1 \times ... \times P_t$, where $P_1, ..., P_t$ are all
syllow subgroups of $G$. Assume that  $x,y$ are two elements of
order $p_i$ in $P_i$ and $\langle x\rangle \neq \langle y\rangle$.
Let $x-a-y$ is a path. By similarly the above we should have
$x,y\in \langle a\rangle$, a contradiction. Thus $P_i$ has
exactly one subgroup of prime order, and consequently is a cyclic
or generalized Quaternion group. Now by  the Observation
\ref{ob}, the poof is complete.

 " $\Longleftarrow$"
 Let $G$ is satisfying in one of  those condition. We note that $G$ has exactly one
  subgroup of any prime factors of $|G|$. Let $G$ be a cyclic group of order of not prime power
 and $x,y \in G$. Let $x,y$ are not adjacent. If $(o(x),o(y))=1$
 then $o(xy)=o(x)o(y)$ and $x,y \in \langle xy \rangle$.
 Therefore $x-xy-y$ is a path. We assume that $p|(o(x),o(y))$ and $a$ is an element
 of order $p$  in $\langle x \rangle$. $a\in \langle y \rangle$, because $G$ is cyclic.
  Thus $x-a-y$ is a path.
 Since $\Gamma_1(G)$ is not complete, $diam(G)=2$.

   (ii) is same to (i).
   \end{proof}


\begin {theorem}\label{t1}
Let $G$  be a nilpotent group that $diam(G)>2$ then $diam(G)=4$.
\end{theorem}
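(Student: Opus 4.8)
The plan is to determine $\Gamma_1(G)$ tightly enough to show that, once $diam(\Gamma_1(G))>2$, it can be neither $3$ nor larger than $4$. The first step is a structural reduction. Since $diam(\Gamma_1(G))>2$ presupposes that $\Gamma_1(G)$ is connected, Theorem \ref{sc} leaves only three possibilities for nilpotent $G$: cyclic, generalized quaternion, or not a $p$-group. The first two, via Observation \ref{ob} and Proposition \ref{propd2}, already have diameter at most $2$, so $G$ must fail to be a $p$-group, whence $G=P_1\times\cdots\times P_t$ with $t\ge 2$ distinct primes. Moreover, if every Sylow factor were cyclic or generalized quaternion, then $G$ would be cyclic of non-prime-power order or a product of a cyclic group of odd order with a quaternion $2$-group, i.e. exactly the shapes in Proposition \ref{propd2}(i),(ii), forcing diameter $2$. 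Hence $diam>2$ guarantees that some Sylow subgroup $P=P_i$ is neither cyclic nor generalized quaternion, so by Observation \ref{obq} it contains two distinct subgroups of order $p:=p_i$.

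Next I would prove the upper bound $diam(\Gamma_1(G))\le 4$ by exhibiting, for arbitrary nontrivial $x,y$, a walk of length at most $4$; nilpotence is the engine, since elements lying in different Sylow factors commute and products of coprime-order elements generate cyclic groups containing both factors. If some prime $p\mid o(x)$ differs from some prime $s\mid o(y)$, take the element $a\in\langle x\rangle$ of order $p$ and the element $b\in\langle y\rangle$ of order $s$; then $a,b$ commute, $o(ab)=ps$, and $a,b\in\langle ab\rangle$, so $x-a-ab-b-y$ is a walk. The only remaining case is that $x$ and $y$ are both nontrivial $\ell$-elements for a single prime $\ell$; then, choosing any $c$ of order $q\ne\ell$ (available because $G$ is not an $\ell$-group), the walk $x-xc-c-yc-y$ works, since in each of $\langle xc\rangle$ and $\langle yc\rangle$ both the $\ell$-part and $c$ are recovered as suitable coprime powers. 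Either way $d(x,y)\le 4$.

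Finally I would establish the lower bound by showing that the two elements $a,b\in P$ of order $p$ with $\langle a\rangle\ne\langle b\rangle$ satisfy $d(a,b)\ge 4$; together with the upper bound this forces $d(a,b)=4$ and hence $diam=4$. The key device is that whenever $p\mid o(z)$, the cyclic group $\langle z\rangle$ has a \emph{unique} subgroup of order $p$; write $U(z)$ for it. Along any edge joining two such vertices one of $\langle z\rangle,\langle z'\rangle$ contains the other, so by uniqueness $U(z)=U(z')$; thus $U$ is constant on any path all of whose vertices have order divisible by $p$. Since $a$ and $b$ have order exactly $p$, every neighbor of $a$ (respectively $b$) is either a power of $a$ or has $a$ as a power, hence still has order divisible by $p$; so on a shortest $a$--$b$ path the first two and last two vertices are $p$-divisible. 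Because $U(a)=\langle a\rangle\ne\langle b\rangle=U(b)$, the path cannot be entirely $p$-divisible, so it contains a vertex of order prime to $p$, necessarily at an index between the third vertex and the third-from-last. This forces the path to have length at least $4$.

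I expect this last step, proving $d(a,b)\ge 4$ rather than merely $\ge 3$, to be the main obstacle, since it is precisely what excludes diameter $3$. The $U$-invariance argument, which pins down exactly where a shortest path is permitted to leave the ``$p$-world,'' is the crucial ingredient, and one must check carefully that \emph{both} neighbors of a prime-order vertex remain $p$-divisible, as this is what buys the extra length needed to reach $4$.
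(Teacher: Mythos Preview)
Your proposal is correct and follows essentially the same strategy as the paper: reduce via Proposition~\ref{propd2} to find two elements $a,b$ of the same prime order $p$ generating distinct cyclic subgroups, prove $d(a,b)\ge4$ using uniqueness of the order-$p$ subgroup inside any cyclic group, and bound $d(x,y)\le4$ for arbitrary $x,y$ by the same two-case walk construction (a prime dividing $o(x)$ differs from one dividing $o(y)$, versus both $x,y$ being $\ell$-elements). Your $U$-invariant phrasing of the lower bound is a mild repackaging of the paper's direct length-$3$ contradiction (if $x-a-b-y$ were a path then $x\in\langle a\rangle$, $y\in\langle b\rangle$, and either containment between $\langle a\rangle$ and $\langle b\rangle$ forces $\langle x\rangle=\langle y\rangle$), but the underlying content is identical.
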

\begin{proof}
We have  $G=P_1 \times P_2 \times ... \times P_t$ where $P_i$ is
$p_i$-syllow subgroup of $G$ for $i\in \{1,...,t\}$. Also $G$ is
not cyclic of prime power. By Proposition \ref{propd2}, $G$ has
two cyclic subgroup of prime order $p_i$ for some $i$. Let $x,y$
be two element of order $p_i$ where $\langle x \rangle \neq
\langle y\rangle$. Indeed $d(x,y)>2$. Let $d(x,y)=3$ and
$x-a-b-y$ is a path. we see that $x\in \langle a\rangle$ and
$y\in \langle b \rangle$. If $a\in \langle b\rangle$ then
$x,y\in\langle b\rangle$, a contradiction.
 Similarly $b\notin \langle a\rangle$ and so $a$ is not adjacent to
 $b$, a contradiction. Consequently $d(x,y) \geq4$ and
  then $diam(\Gamma_1(G))\geq 4$. On the other
hand, for any two elements $x,y \in G$  we have two cases:

 Case1. $x,y \in P_i$ for some $i$. We have $x-xa-a-ay-y$ is a
       path where $a \in P_j$ and $j\neq i$.

       Case2. $x,y \notin P_i$, for each $i$. Then there exist primes integer
        $p,q$, such that $p|o(x)$,$q|o(y)|$ and $p\neq q$.
         Let $a\in \langle x\rangle$  of order $p$ and
 $b\in \langle y\rangle$ of order $q$. We see
 that $x-a-ab-b-y$ is a path.

  Consequently $diam(\Gamma_1(G))=4$.
\end{proof}

Note that by the Observation \ref{ob}, Proposition \ref{propd2} and
Theorem \ref{t1}, the diameters of all finite nilpotent groups
were calculated.

\begin{example}

 (i)$diam(Q_n)=3$ for odd integer  $n$.

 (ii)$diam(S_3 \times Z_6)=4$.
\end{example}

Now we calculate the clique number of power graph of finite
groups.

 Let $n$ be an integer. The set $A=\{d_1,..d_t\}$  of
divisors of $n$ is said a $CD$-set  of $n$ if $d_1>1$ and one of
any two elements of $A$ divided by another. The maximal of these
set is said an $MCD$-set of $n$. We define
$weight(A)=\varphi(d_1)+...+\varphi(d_t)$ and

$weight(n)=max\{ weight(A)| A$ is an $MCD$-set of $n$\}.

\begin{lemma}\label{mcs}
Let $A=\{d_1,...,d_t\}$, where $d_1<...<d_t$. Then $A$ is an
$MCD$-set if and only if $d_1$ be a prime, $d_t=n$ and,
$d_{i+1}/d_i$  is a prime for $i\in\{2,..,t\}$
\end{lemma}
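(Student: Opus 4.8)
The plan is to establish both implications by reducing maximality to the impossibility of inserting a single new divisor, and then classifying where such an insertion could occur. Since $A$ is a chain under divisibility with $d_1 < \cdots < d_t$, any element $d$ with $1 < d \mid n$ for which $A \cup \{d\}$ is still a $CD$-set must be comparable to every $d_i$, and hence must fall into exactly one of three positions: $d < d_1$, $d > d_t$, or $d_i < d < d_{i+1}$ for some $i$. Comparability together with a numerical inequality forces the divisibility direction: $d < d_1$ gives $d \mid d_1$, while $d_i < d < d_{i+1}$ gives $d_i \mid d \mid d_{i+1}$. Thus $A$ is an $MCD$-set precisely when none of these three positions admits a valid $d$, and I would organize the whole argument around this trichotomy.

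For the forward direction I would argue by contraposition, showing that if any of the three listed conditions fails then $A$ can be enlarged. If $d_1$ is not prime, it has a divisor $d$ with $1 < d < d_1$; since $d \mid d_1 \mid d_2 \mid \cdots$, the set $A \cup \{d\}$ is a strictly larger $CD$-set. If $d_t \neq n$, then $d_t \mid n$ with $d_t < n$, so $A \cup \{n\}$ strictly enlarges $A$. Finally, if some consecutive quotient $d_{i+1}/d_i$ is composite, pick a prime $p \mid (d_{i+1}/d_i)$ and set $d = p\,d_i$; then $d_i \mid d \mid d_{i+1}$ with $d_i < d < d_{i+1}$ (because $1 < p < d_{i+1}/d_i$), so again $A \cup \{d\}$ is a strictly larger $CD$-set. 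In each case maximality fails, which proves the forward implication.

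For the converse I would assume $d_1$ is prime, $d_t = n$, and every consecutive quotient $d_{i+1}/d_i$ is prime, and rule out each of the three insertion positions. If $d < d_1$ then $d \mid d_1$ with $1 < d < d_1$, impossible since $d_1$ is prime. If $d > d_t = n$ then $d \mid n$ forces $d \le n$, a contradiction. If $d_i < d < d_{i+1}$ then $d/d_i$ is a divisor of the prime $d_{i+1}/d_i$ lying strictly between $1$ and $d_{i+1}/d_i$, which cannot happen. Hence no divisor $>1$ can be adjoined, so $A$ is maximal.

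The only genuinely nontrivial step is the refinement argument in the forward direction (the composite-ratio case): the insight that a composite gap $d_{i+1}/d_i$ can always be split by multiplying $d_i$ by a single prime factor of the ratio. Everything else is bookkeeping with the divisibility order, and the converse is essentially the same observation read backwards, using that a proper divisor of a prime is trivial.
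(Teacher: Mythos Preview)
Your argument is correct and complete. The paper, however, states this lemma without proof---it is left as an evident fact about divisor chains---so there is no proof in the paper to compare against. Your trichotomy (insert below $d_1$, above $d_t$, or strictly between consecutive $d_i$) together with the composite-ratio refinement is exactly the natural justification, and it also silently repairs the index range in the paper's statement (the condition on $d_{i+1}/d_i$ should run over $i\in\{1,\dots,t-1\}$, not $\{2,\dots,t\}$), since you treat all consecutive quotients uniformly.
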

\begin{lemma}\label{mcs1}
Let $G$ be a group and $K$ is an maximal complete subgraph. Then
there exist an maximal cyclic subgroup $\langle a \rangle$ such
that $V(K) \subseteq \langle a \rangle$. Moreover  $A=\{o(x)|x
\in V(K)\}$ is an $MCD$-set of $o(a)$.
\end{lemma}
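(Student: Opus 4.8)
The plan is to exploit two facts: inside any clique of $\Gamma_1(G)$ the cyclic subgroups generated by its vertices form a chain under inclusion, and inside a single cyclic group adjacency is governed entirely by divisibility of orders.

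First I would establish the cyclic subgroup. If $x,y\in V(K)$ then, since $K$ is complete, one of them is a power of the other, so $\langle x\rangle\subseteq\langle y\rangle$ or $\langle y\rangle\subseteq\langle x\rangle$. Hence $\{\langle x\rangle : x\in V(K)\}$ is a chain of subgroups; as $G$ is finite it has a largest member, namely $\langle b\rangle$ for a vertex $b\in V(K)$ of maximal order, and then $V(K)\subseteq\langle b\rangle$. Taking $a=b$, I would check that $\langle a\rangle$ is maximal cyclic: were $\langle a\rangle\subsetneq\langle c\rangle$, the generator $c$ would satisfy $c\neq 1$ and $c\notin\langle a\rangle\supseteq V(K)$, so $c\notin V(K)$, while every $x\in V(K)$ is a power of $c$; thus $V(K)\cup\{c\}$ would be a strictly larger complete subgraph, contradicting the maximality of $K$.

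For the \emph{moreover} part, every $x\in V(K)$ lies in $\langle a\rangle$, so $o(x)\mid o(a)$ and $o(x)>1$, and the chain property forces $o(x)\mid o(y)$ or $o(y)\mid o(x)$ for all $x,y\in V(K)$; thus $A=\{o(x):x\in V(K)\}$ is a CD-set of $o(a)$. The crux is to show it is \emph{maximal}. Here I use that in the cyclic group $\langle a\rangle$ each divisor $d$ of $o(a)$ is realized by a unique subgroup of order $d$, and two nontrivial elements of $\langle a\rangle$ are adjacent exactly when their orders are comparable under divisibility. If $A$ were not maximal, some divisor $d>1$ of $o(a)$ with $d\notin A$ would be comparable to every element of $A$; picking a generator $z$ of the order-$d$ subgroup gives $o(z)=d\notin A$, so $z\notin V(K)$, yet $z$ is joined to every vertex of $K$. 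Then $V(K)\cup\{z\}$ is a clique properly containing $K$, a contradiction. Hence $A$ admits no proper CD-extension, i.e. $A$ is an MCD-set of $o(a)$. (Alternatively one could verify the three conditions of Lemma \ref{mcs} directly, namely $d_1$ prime, $d_t=o(a)$, and consecutive ratios prime, but this uniform maximality argument is shorter.)

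The main obstacle is the translation step: establishing cleanly that, for elements of the one cyclic group $\langle a\rangle$, power-graph adjacency coincides with comparability of orders, and ensuring the candidate vertex $z$ is genuinely new and genuinely joined to all of $K$. The unique-subgroup-per-divisor structure of cyclic groups is exactly what makes both the existence of such a $z$ and its non-membership in $V(K)$ automatic; the remaining manipulations are a routine dictionary between subgroup inclusion and divisibility.
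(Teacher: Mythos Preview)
Your proof is correct and follows the same overall strategy as the paper: pick $a\in V(K)$ of maximal order, observe $V(K)\subseteq\langle a\rangle$, note that $A$ is a CD-set, and if $A$ were not an MCD-set, locate a new element of $\langle a\rangle$ adjacent to all of $V(K)$, contradicting the maximality of $K$. The differences are minor but in your favour. First, you actually prove that $\langle a\rangle$ is a maximal cyclic subgroup of $G$, whereas the paper simply asserts this as ``clear''. Second, for the MCD-claim the paper invokes the characterization of Lemma~\ref{mcs}: it finds consecutive $o(a_i)<o(a_{i+1})$ in $A$ whose ratio is not prime and inserts an element $b\in\langle a_{i+1}\rangle$ of order $p\,o(a_i)$; your argument instead appeals to the \emph{definition} of maximality of a CD-set and produces the extra vertex $z$ from any missing comparable divisor. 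This is cleaner and also automatically covers the boundary case where the least element of $A$ is not prime, which the paper's argument, as written, does not explicitly address.
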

\begin{proof}
Assume that $a\in V(K)$ and $o(a)=max(A)$.
 Clearly $V(K)\subseteq \langle a \rangle$, $\langle a \rangle$
 is an maximal cyclic subgroup of $G$ and $A$ is a $CD$-set of $o(a)$.
 Let $A$ is not an  $MCD$-set and $A=\{o(a_1),... , o(a_t)\}$,
 where $o(a_1)\leq ... \leq o(a_t)$.
 Then,   there exist $i$ such that $o(a_i)\neq O(a_{i+1})$ and
 $o(a_{i+1})/o(a_i)$  is not a prime. Assume that $p|o(a_{i+1})/o(a_i)$ and
 $b$ be an element of order $po(a_i)$ in $\langle a_{i+1} \rangle$.
  We see that $V(K) \subseteq N(b)$, a contradiction.
 \end{proof}

\begin{theorem}
Let $G$ be a finite group. Then
$\omega(\Gamma_1(G))=\omega(\Gamma(G))-1=max\{weight(o(a))| a\in
G\}$.
\end{theorem}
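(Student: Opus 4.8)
The plan is to establish the two stated equalities separately. For the first, $\omega(\Gamma_1(G)) = \omega(\Gamma(G)) - 1$, I would exploit that the identity $1$ is a universal vertex of $\Gamma(G)$: since $1 = x^{o(x)}$ is a power of every $x \in G$, the vertex $1$ is adjacent to all other vertices. Consequently, if $K$ is any maximum clique of $\Gamma_1(G)$, then $K \cup \{1\}$ is a clique of $\Gamma(G)$, giving $\omega(\Gamma(G)) \geq \omega(\Gamma_1(G)) + 1$. Conversely, any maximum clique $K'$ of $\Gamma(G)$ must contain $1$ (otherwise $K' \cup \{1\}$ would be strictly larger), so $K' \setminus \{1\}$ is a clique of $\Gamma_1(G)$ of size $\omega(\Gamma(G)) - 1$. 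This yields the reverse inequality and hence the equality.

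For the second equality, $\omega(\Gamma_1(G)) = max\{weight(o(a)) \mid a \in G\}$, I would prove two inequalities, using Lemma \ref{mcs1} together with the elementary fact that a cyclic group of order $n$ contains exactly $\varphi(d)$ elements of order $d$ for each divisor $d \mid n$, and that two elements of a cyclic group are adjacent in $\Gamma_1(G)$ precisely when their orders are comparable under divisibility (since $o(x) \mid o(y)$ forces $\langle x \rangle \subseteq \langle y \rangle$ in a cyclic group). For the lower bound I would pick $a$ realizing the maximum and an $MCD$-set $A$ of $o(a)$ of maximal weight, then form $K = \{x \in \langle a \rangle : o(x) \in A\}$; any two of its elements have comparable orders, so $K$ is a clique of size $\sum_{d \in A} \varphi(d) = weight(A) = weight(o(a))$, whence $\omega(\Gamma_1(G)) \geq weight(o(a))$.

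For the upper bound I would take a maximum clique $K$ of $\Gamma_1(G)$ and apply Lemma \ref{mcs1} to obtain a maximal cyclic subgroup $\langle a \rangle \supseteq V(K)$ for which $A = \{o(x) \mid x \in V(K)\}$ is an $MCD$-set of $o(a)$. Since $K$ uses only elements whose order lies in $A$, and $\langle a \rangle$ has exactly $\varphi(d)$ elements of each order $d$, we get $|K| \leq \sum_{d \in A} \varphi(d) = weight(A) \leq weight(o(a)) \leq max\{weight(o(b)) \mid b \in G\}$. Combining this with the lower bound gives the second equality, and together with the first part completes the theorem.

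The step I expect to be the main obstacle is the upper bound, which rests entirely on Lemma \ref{mcs1} to force a maximum clique into a single maximal cyclic subgroup whose orders form an $MCD$-set; once that structural reduction is available, the weight bound reduces to counting elements by order. A minor point to verify is that maximizing $weight(o(a))$ over all $a \in G$ coincides with maximizing over maximal cyclic subgroups, which holds because $o(a) \mid o(b)$ whenever $\langle a \rangle \subseteq \langle b \rangle$, and $weight$ is monotone along divisibility (any $MCD$-set of $o(a)$ sits inside a clique of $\langle b \rangle$).
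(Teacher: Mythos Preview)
Your proposal is correct and follows essentially the same approach as the paper: both reduce via Lemma~\ref{mcs1} to a maximal cyclic subgroup $\langle a\rangle$ whose order-set is an $MCD$-set, and then count $\varphi(d)$ elements of each order $d$ to identify the clique size with a weight. Your write-up is in fact more complete than the paper's, since you explicitly supply the lower-bound construction and the argument for $\omega(\Gamma_1(G))=\omega(\Gamma(G))-1$, both of which the paper leaves implicit.
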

\begin{proof}
Assume that $K,A$ and $a$ be similar  to last Lemma. We have
$\langle a_i \rangle$ and so $V(K)$ has exactly $\varphi(o(a_i))$
elements of order $o(a_i)$. By  Lemma \ref{mcs1}, the proof is
complete.
\end{proof}
\begin{corollary}
For the nilpotent group $G$,

 $\omega(\Gamma_1(G))=weight(exp(G))$

where $exp(G)=Min\{ n|x^n=1$ for any $x\in G\}$.
\end{corollary}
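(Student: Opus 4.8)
The plan is to reduce everything to the preceding theorem, which asserts $\omega(\Gamma_1(G))=\max\{weight(o(a))\mid a\in G\}$ for any finite group $G$. Thus for nilpotent $G$ it suffices to prove $\max\{weight(o(a))\mid a\in G\}=weight(exp(G))$, and I would establish this by two opposite inequalities.

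For the lower bound I would produce an element of order exactly $exp(G)$. Writing $G=P_1\times\cdots\times P_t$ with $P_i$ the Sylow $p_i$-subgroup, one has $exp(G)=\prod_i exp(P_i)$ because the orders $exp(P_i)=p_i^{e_i}$ are pairwise coprime, so their least common multiple is their product. Each $p$-group $P_i$ contains an element $a_i$ of order $exp(P_i)$, and since these orders are coprime the element $a=(a_1,\dots,a_t)$ has order $\prod_i exp(P_i)=exp(G)$. Hence $weight(exp(G))=weight(o(a))\le\max\{weight(o(a'))\mid a'\in G\}$. This is the only step where nilpotency is genuinely used, since a general group (for instance $S_3$, of exponent $6$) need not contain an element of order $exp(G)$.

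For the upper bound, every $a\in G$ satisfies $o(a)\mid exp(G)$, so it is enough to show that $weight$ is monotone under divisibility: if $m\mid n$ then $weight(m)\le weight(n)$. Here I would invoke Lemma \ref{mcs}. Take an $MCD$-set $A=\{d_1<\cdots<d_s\}$ of $m$ realizing $weight(m)$; by Lemma \ref{mcs}, $d_1$ is prime, $d_s=m$, and every consecutive ratio is prime. Factor $n/m=r_1\cdots r_l$ into primes and adjoin the divisors $d_{s+j}=m\,r_1\cdots r_j$ for $1\le j\le l$. The resulting set again meets the criterion of Lemma \ref{mcs} (it starts at the prime $d_1$, ends at $d_{s+l}=n$, and all consecutive ratios remain prime), hence is an $MCD$-set of $n$; since each $\varphi(d_{s+j})\ge 1$, its weight is at least $weight(A)=weight(m)$, giving $weight(n)\ge weight(m)$. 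Applying this with $m=o(a)$ and $n=exp(G)$ yields $weight(o(a))\le weight(exp(G))$ for every $a\in G$, and therefore $\max\{weight(o(a))\mid a\in G\}\le weight(exp(G))$.

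Combining the two bounds gives the claimed equality. I expect the monotonicity step to carry the only real content; the extension-of-chain argument is clean once Lemma \ref{mcs} is available, and the point to check carefully is that the adjoined divisors keep the chain strictly increasing and preserve the prime-ratio condition, so that the enlarged set is genuinely an $MCD$-set of $n$ rather than a proper sub-chain.
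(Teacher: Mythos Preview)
Your argument is correct. The paper states this corollary without proof, treating it as an immediate consequence of the preceding theorem; your proposal supplies exactly the two ingredients one would expect the reader to fill in, namely that a finite nilpotent group contains an element of order $exp(G)$ (via the Sylow direct product decomposition) and that $weight$ is monotone with respect to divisibility (via the chain-extension argument using Lemma~\ref{mcs}). Both steps are sound as written, and the place you flag as needing care---that the adjoined divisors $d_{s+j}=m\,r_1\cdots r_j$ keep the chain strictly increasing with prime consecutive ratios---does indeed hold since each $r_j\ge 2$. One could shorten the upper bound slightly by noting that any $CD$-set extends to an $MCD$-set with at least as large a weight (because $\varphi\ge 1$), so it would suffice to observe that an $MCD$-set of $m$ is already a $CD$-set of $n$; but your explicit verification via Lemma~\ref{mcs} is equally valid.
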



\end{document}